\documentclass[12pt]{article}%
\usepackage{amsmath}
\usepackage{amsfonts}
\usepackage{amssymb}
\usepackage{graphicx}%
\providecommand{\U}[1]{\protect\rule{.1in}{.1in}}
\newtheorem{theorem}{Theorem}

\newenvironment{proof}[1][Proof]{\noindent\textbf{#1.} }{\ \rule{0.5em}{0.5em}}
\begin{document}

\title{Comments on "Condensation of the digraph associated with a reciprocal matrix
and a vector", arXiv:2607.10279}
\author{Susana Furtado \thanks{sbf@fep.up.pt. CEMS.UL and Faculdade de Economia e
Gest\~{a}o da Universidade do Porto.}
\and Charles R. Johnson \thanks{crjmatrix@gmail.com. }}
\maketitle

\begin{abstract}
Since manuscript \cite{R}, recently posted on arXiv, is closely related to our
previous work, we would like to clarify some facts about it. Namely, we would
like to point out that several results in \cite{R} are already known (Theorem
1 and Corollaries 2, 3 and 4) and some observations regarding our work
\cite{FJ} are not correct. We also notice that R\'{e}dei's Theorem, a
classical result in graph theory, gives a simple tool to show Theorems 5 and 9.

\end{abstract}

We use the notation in \cite{R}. Namely, we denote by $\mathcal{PC}_{n}$ the
set of $n$-by-$n$ reciprocal matrices. Given $A=[a_{ij}]\in\mathcal{PC}_{n}$
and a positive $n$-vector $w$, we denote by $G_{A,w}$ the directed graph with
vertex set $\{1,\ldots,n\}$ and an edge $i\rightarrow j$, $i\neq j$, if and
only if $w_{i}\geq a_{ij}w_{j}.$

\bigskip

A useful known result is that a positive $n$-vector $w$ is efficient for
$A\in\mathcal{PC}_{n}$ if and only if $G_{A,w}$ is strongly connected or,
equivalently, $G_{A,w}$ has a Hamiltonian cycle. The latter equivalence
follows from results in classical graph theory, and was not shown in
\cite{R1}, as claimed in \cite{R}.

\bigskip

Theorem 1 in \cite{R} is an immediate consequence of the simple, more general,
result given next. This latter result already appears in the note \cite{R2} by
R. Fernandes. It also follows trivially from Lemma 2 in \cite{FJ} (though not
stated explicitly in that paper). In fact, after publication of \cite{FJ} and
motivated by an unfortunate sentence in the conclusions of this paper, R.
Fernandes stated it explicitly in \cite{R2}, not noticing that its proof was
implicit in \cite{FJ}.

\bigskip

If $A\in\mathcal{PC}_{n}$ is consistent and $w$ is the Perron vector of $A$,
then $G_{A,w}$ is strongly connected. So, we assume that $A$ is inconsistent.

\begin{theorem}
Let $A\in\mathcal{PC}_{n}$, $n\geq3$, be inconsistent and let $w$ be its
Perron vector$.$ Then, for any $i\in\{1,\ldots,n\}$, there is a $k\in
\{1,\ldots,n\}$ such that $k\rightarrow i$ is an edge in $G_{A,w}$ and
$i\rightarrow k$ is not.
\end{theorem}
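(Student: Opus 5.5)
The plan is to read the conclusion directly off the eigenvalue equation $Aw=\lambda w$, row by row. The only outside ingredient is the classical fact (Saaty) that the Perron root $\lambda$ of a reciprocal matrix satisfies $\lambda\geq n$, with equality if and only if $A$ is consistent. Since $A$ is inconsistent, we have $\lambda>n$, and this strict inequality is what will produce the required vertex $k$.

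First I translate the two edge conditions into a single inequality. The edge $k\rightarrow i$ is present iff $w_{k}\geq a_{ki}w_{i}=w_{i}/a_{ik}$, that is, iff $a_{ik}w_{k}\geq w_{i}$. The edge $i\rightarrow k$ is absent iff $w_{i}<a_{ik}w_{k}$. So both conditions hold for a given $k\neq i$ as soon as
\[
\frac{a_{ik}w_{k}}{w_{i}}>1 .
\]
It therefore suffices to find, for each $i$, some $k\neq i$ satisfying this strict inequality.

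Next, fix $i$ and look at the $i$-th row of $Aw=\lambda w$. Since $a_{ii}=1$, it reads $w_{i}+\sum_{k\neq i}a_{ik}w_{k}=\lambda w_{i}$. Dividing by $w_{i}>0$ gives
\[
\sum_{k\neq i}\frac{a_{ik}w_{k}}{w_{i}}=\lambda-1>n-1 .
\]
This sum has exactly $n-1$ positive terms and exceeds $n-1$, so at least one term is strictly greater than $1$. That index $k$ is the one required.

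The only step with real content is $\lambda>n$ for inconsistent $A$; everything else is bookkeeping. If that fact is not taken as known, I would recover it from the standard identity
\[
n\lambda=\sum_{i,j}a_{ij}\frac{w_{j}}{w_{i}}=n+\sum_{i<j}\left(x_{ij}+x_{ij}^{-1}\right),
\qquad x_{ij}=\frac{a_{ij}w_{j}}{w_{i}} .
\]
Each summand $x_{ij}+x_{ij}^{-1}$ is at least $2$, with equality only when $x_{ij}=1$. Hence $\lambda\geq n$, and $\lambda=n$ forces $a_{ij}=w_{i}/w_{j}$ for all $i,j$, which is consistency. The hypothesis $n\geq3$ plays no role in this argument; it only reflects that every $2$-by-$2$ reciprocal matrix is consistent, so no inconsistent $A$ exists for $n=2$.
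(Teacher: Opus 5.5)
Your proof is correct and is essentially the paper's argument. The paper first normalizes $A$ by diagonal similarity (via Lemma 2 of the cited work) so that $w$ is the all-ones vector, then obtains the contradiction $\lambda=\sum_k a_{ik}\le n$ from row $i$; this is your row-$i$ computation applied to the entries $a_{ik}w_k/w_i$ of the normalized matrix. Your version skips the normalization, argues directly rather than by contradiction, and also proves the fact $\lambda>n$ for inconsistent $A$, which the paper only cites.
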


\begin{proof}
For simplicity, using Lemma 2 in \cite{FJ} and standard arguments, we may
assume, without loss of generality, that $A$ has constant row sums $\lambda$
(the Perron eigenvalue of $A$). Then, $w$ has all entries $1$. If both
$k\rightarrow i$ and $i\rightarrow k$ are edges in $G_{A,w}$ then
\[
1=\frac{w_{i}}{w_{k}}=a_{ik}=a_{ki}.
\]
If $k\rightarrow i$ is not an edge in $G_{A,w},$ then,
\[
1=\frac{w_{k}}{w_{i}}<a_{ki}=\frac{1}{a_{ik}}.
\]
Suppose that, for any $k\in\{1,\ldots,n\}\backslash\{i\}$, either
$k\rightarrow i$ is not an edge in $G_{A,w}$ or both $k\rightarrow i$ and
$i\rightarrow k$ are edges in $G_{A,w}.$ Then,
\[
\lambda=\Sigma_{k=1}^{n}a_{ik}\leq n,
\]
a contradiction since the Perron eigenvalue $\lambda$ of the inconsistent $A$
is $>n$ (see, for example, \cite{FJ}).
\end{proof}

\bigskip

Corollary 2 in \cite{R} is an immediate consequence of the above Theorem 1 and
was already mentioned in \cite{R2}. Corollary 3 is very well-known (see, for
example the introduction of \cite{FJ}, among several other references).
Corollary 4 is Corollary 29 in \cite{FJ}, which is stated there as an "if and
only if" claim. Unfortunately, this citation does not appear in the manuscript
\cite{R} associated with the mentioned corollary.

\bigskip

Theorem 5 in \cite{R} is an immediate consequence of R\'{e}dei's Theorem for
semicomplete digraphs, which ensures the existence of a path $\gamma_{1}%
\cdots\gamma_{n}$ with $n$ vertices in $G_{A,w},$ for any $A\in\mathcal{PC}%
_{n}$ and any positive $n$-vector. Changing the pair of entries of $A$ in
positions $\gamma_{n},\gamma_{1}$ and $\gamma_{1},\gamma_{n}$ to positive
entries $a\leq\frac{w_{\gamma_{n}}}{w_{\gamma_{1}}}$ and $\frac{1}{a},$
respectively, extends the path to a Hamiltonian cycle in the graph $G_{B,w}$
of the new matrix $B$. Of course, the choice of the new entries that minimize
the Frobenius norm of $B-ww^{-1}$ is $a=\frac{w_{\gamma_{n}}}{w_{\gamma_{1}}}%
$. This choice reduces the square of the Frobenius norm of $A-ww^{-1}$ by
$(a_{\gamma_{1}\gamma_{n}}-\frac{w_{\gamma_{1}}}{w_{\gamma_{n}}}%
)^{2}+(a_{\gamma_{n}\gamma_{1}}-\frac{w_{\gamma_{n}}}{w_{\gamma_{1}}})^{2}.$
Considering all the existent paths as above, we can produce the updated matrix
$B$ that leads to the minimal Frobenius norm of $B-ww^{-1}$. These simple
observations seem to be the purpose of Theorem 7 in \cite{R}.

\bigskip

The comment just after the proof of Theorem 5 is not correct, since changing
one pair of entries in a reciprocal matrix changes the Perron vector of that
matrix as well.

\bigskip

Before Theorem 9 in \cite{R}, we have " On the other hand, Furtado and Johnson
(2024b, Theorem 33) proved that every reciprocal matrix with an inefficient
Perron vector admits an extension whose Perron vector is efficient. However,
their result does not address whether the first $n$ components of the
efficient vector of the extension can coincide with the Perron vector of the
original matrix. In this section, we use the condensation digraph to obtain
such extensions. More precisely, starting from a reciprocal matrix with an
inefficient Perron vector, we construct an extension of the reciprocal matrix
and an extension of the Perron vector, efficient for this matrix."

\bigskip

This comment about \cite{FJ} is hopelessly flawed. Theorem 9 of \cite{R}
considers a trivial question, completely different from the nontrivial problem
studied in \cite{FJ}. In fact, in Theorem 9 the extension of vector $w$ is
not, in general, the Perron vector of the extended reciprocal matrix, while in
\cite{FJ} the matrix is extended to have an efficient Perron vector. (By the
way, the second claim in Theorem 9 doesn't seem well stated, as $w^{\prime}$
is related with $B$). Moreover, in Theorem 9, $w$ being the Perron vector of
$A$ is not important. In fact, the next more general result, which has a very
simple proof, is valid. Given $B\in\mathcal{PC}_{n+1},$ we denote by $B(n+1)$
the principal submatrix of $B$ obtained by deleting row and column $n+1$.

\begin{theorem}
Let $A=[a_{ij}]\in\mathcal{PC}_{n}$, $n\geq2.$ Let $w=\left[
\begin{array}
[c]{ccc}%
w_{1} & \cdots & w_{n}%
\end{array}
\right]  ^{T}$ be a positive $n$-vector. Then, there are $B\in\mathcal{PC}%
_{n+1},$ with $B(n+1)=A,$ and $w_{n+1}>0$ such that $w^{\prime}=\left[
\begin{array}
[c]{c}%
w\\
w_{n+1}%
\end{array}
\right]  $ is efficient for $B.$ Moreover, $B$ can be chosen so that the
corresponding $w_{n+1}$ is arbitrarily small.
\end{theorem}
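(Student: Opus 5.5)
We use the characterization recalled at the beginning of this note: $w'$ is efficient for $B$ if and only if $G_{B,w'}$ is strongly connected, equivalently has a Hamiltonian cycle. The plan is to take the Hamiltonian path in $G_{A,w}$ given by R\'{e}dei's Theorem and close it into a Hamiltonian cycle through the new vertex $n+1$.

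\textbf{Step 1 (the path).} $G_{A,w}$ is semicomplete: for $i\neq j$, either $w_{i}\geq a_{ij}w_{j}$ or $w_{j}\geq a_{ji}w_{i}$ holds, since $a_{ji}=1/a_{ij}$. By R\'{e}dei's Theorem there is a path $\gamma_{1}\to\gamma_{2}\to\cdots\to\gamma_{n}$ in $G_{A,w}$ visiting every vertex. Since $B(n+1)=A$ and $w'$ restricts to $w$, the edges of $G_{B,w'}$ among $\{1,\ldots,n\}$ are exactly those of $G_{A,w}$. So this path persists in $G_{B,w'}$ for any choice of the last row and column of $B$ and of $w_{n+1}$.

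\textbf{Step 2 (closing the cycle).} Fix any $w_{n+1}>0$. We want the edges $\gamma_{n}\to n+1$ and $n+1\to\gamma_{1}$ in $G_{B,w'}$.
\begin{itemize}
\item $\gamma_{n}\to n+1$ requires $w_{\gamma_{n}}\geq b_{\gamma_{n},n+1}w_{n+1}$.
\item $n+1\to\gamma_{1}$ requires $w_{n+1}\geq b_{n+1,\gamma_{1}}w_{\gamma_{1}}$.
\end{itemize}
Choose $b_{\gamma_{n},n+1}=w_{\gamma_{n}}/w_{n+1}$ and $b_{n+1,\gamma_{1}}=w_{n+1}/w_{\gamma_{1}}$, and set the reciprocal entries accordingly. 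Then both conditions hold with equality.

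If $n\geq2$, then $\gamma_{1}\neq\gamma_{n}$, so these entries lie in different positions of the last row and column and do not conflict. The remaining entries $b_{k,n+1}$, $k\notin\{\gamma_{1},\gamma_{n}\}$, can be taken arbitrary positive, for instance $b_{k,n+1}=w_{k}/w_{n+1}$, with $b_{n+1,k}=1/b_{k,n+1}$ and $b_{n+1,n+1}=1$. This gives $B\in\mathcal{PC}_{n+1}$ with $B(n+1)=A$.

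Now $\gamma_{1}\to\cdots\to\gamma_{n}\to n+1\to\gamma_{1}$ is a Hamiltonian cycle in $G_{B,w'}$. Hence $w'$ is efficient for $B$.

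\textbf{Step 3 (small $w_{n+1}$).} The construction in Step 2 works for every $w_{n+1}>0$, since the entries of $B$ were chosen after $w_{n+1}$. Therefore $w_{n+1}$ may be taken arbitrarily small, which proves the second claim.

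\textbf{Main obstacle.} Essentially the only nontrivial ingredient is the existence of the Hamiltonian path, which R\'{e}dei's Theorem supplies. The one point needing care is that $\gamma_{1}\neq\gamma_{n}$, which is exactly where $n\geq2$ is used, so that the two prescribed entries of $B$ are independent.
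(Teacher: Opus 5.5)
Your proof is correct and follows the paper's route: take a Hamiltonian path in the semicomplete digraph $G_{A,w}$ from R\'{e}dei's Theorem, then close it into the Hamiltonian cycle $\gamma_{1}\cdots\gamma_{n}(n+1)\gamma_{1}$ in $G_{B,w^{\prime}}$ by choosing the two new reciprocal pairs appropriately. The only cosmetic difference is that you fix $w_{n+1}$ first and choose those entries to give equality, which makes the ``arbitrarily small'' claim immediate.
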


\begin{proof}
By R\'{e}dei's Theorem for semicomplete digraphs, there is a path with $n$
vertices in $G_{A,w.}$ Without loss of generality, suppose that $12\cdots n$
is such a path. This means that $\frac{w_{i}}{w_{i+1}}\geq a_{i,i+1},$ for
$i=1,\ldots,n-1.$ Now choose $a_{n,n+1},$ $a_{n+1,1}$ and $w_{n+1}$ such that%
\[
\frac{w_{n}}{w_{n+1}}\geq a_{n,n+1}>0\text{ and }\frac{w_{n+1}}{w_{1}}\geq
a_{n+1,1}>0.
\]
Choose the remaining entries $a_{n+1,j}>0,$ $j=2,\ldots,n-1$, arbitrarily.
Then the reciprocal matrix $B=[a_{ij}]\in\mathcal{PC}_{n+1}$ extends $A$.
Moreover, $w^{\prime}$ is efficient for $B$, as $12\cdots n(n+1)1$ is a
Hamiltonian cycle in $G_{B,w^{\prime}.}$ Of course, by a convenient choice of
$a_{n,n+1}$ and $a_{n+1,1}$, $w_{n+1}$ can be chosen arbitrarily small.
\end{proof}

\bigskip

\bigskip

\bigskip

\bigskip


\bigskip

\begin{thebibliography}{9}                                                                                                %


\bibitem {R1}R. Fernandes, \textit{R. Palma, Positive vectors, pairwise
comparison matrices and directed Hamiltonian cycles}, Linear Algebra and its
Applications 699, 312-330 (2024).

\bibitem {R2}R. Fernandes, \textit{On a conjecture concerning the extensions
of a reciprocal matrix}, arXiv:2507.16593 [math.CO].

\bibitem {R}R. Fernandes, \textit{Condensation of the digraph associated with
a reciprocal matrix and a vector}, arXiv:2607.10279 [math.CO].

\bibitem {FJ}S. Furtado, C. R. Johnson, \textit{Efficiency analysis for the
Perron vector of a reciprocal matrix}, Applied Mathematics and Computation 480
(2024), 128913.
\end{thebibliography}
\end{document}